\documentclass[11pt,reqno]{amsart}
\usepackage[margin=3cm]{geometry}
\usepackage{graphics, amsmath,amssymb, enumitem, comment, url, mathtools}
\usepackage{graphicx}
\usepackage{epsfig}

\newif\ifcolorcomments

\usepackage{mathrsfs}  
\colorcommentstrue
\usepackage{amssymb}
\usepackage{amsmath,amsthm}

\usepackage{colortbl}

\newtheorem{theorem}{Theorem}[section]
\newtheorem{lemma}[theorem]{Lemma}
\newtheorem{proposition}[theorem]{Proposition}
\newtheorem{corollary}[theorem]{Corollary}
\theoremstyle{definition}

\newtheorem{remark}[theorem]{Remark}

\newcommand{\F}{\mathbb F}

\newcommand{\R}{\mathbb R}

\newcommand{\Z}{\mathbb Z}

\renewcommand{\text}{\textup}

\newcommand{\NPC}[1]{\ignorespaces}

\newif\ifdraft\drafttrue

\def\Z{\mathbb Z}

\def\R{\mathbb R}

\IfFileExists{marvosym.sty}{
\RequirePackage{marvosym} 
}{

}

\IfFileExists{wasysym.sty}{
\RequirePackage{wasysym}
}{

}

\mathtoolsset{showonlyrefs}
\newcommand*{\myDots}{\ifmmode\mathellipsis\else.\kern-0.07em.\kern-0.07em.\fi}

\newcommand{\cL}{\mathcal L}
\newcommand{\balpha}{\boldsymbol{\alpha}}
\newcommand{\dist}{\operatorname{dist}}
\newcommand{\card}{\#}

\usepackage{amsmath}

\newcommand {\ignore}[1] {}

\title[Nine distances and best approximations]{Nine-distance theorem and growth of best-approximation denominators}

\author[Nikita Shulga]{Nikita Shulga}
\address{Nikita Shulga, Sydney Mathematical Research Institute, The University of Sydney, NSW, Australia}
\email{nikita.shulga@sydney.edu.au}

\subjclass[2020]{Primary 11J13; Secondary 11K31, 52C17}
\keywords{best simultaneous approximation, best-approximation denominator, Kronecker sequence, three-gap theorem, Steinhaus conjecture, kissing number}

\begin{document}

\begin{abstract}
We prove a nine-distance theorem for Kronecker sequences on flat three-tori.
That is, we show that among the first $N$ orbit points, at most nine distinct positive
nearest-neighbour distances occur. This proves the conjecture of Haynes and
Marklof. An example of Dettmann shows that nine is optimal. More generally, we prove that on a flat $d$-dimensional torus the number of such distances is at most
$2^d+1$.

The main tool is a new growth theorem for the denominators
$q_1<q_2<\cdots$ of best simultaneous approximations in a $d$-dimensional
inner-product space, which is of independent interest. We prove that, whenever $q_{n+2^d}$ is defined, either $q_{n+2^d}\ge2q_{n+1}$, or the indices $1,\ldots,2^d$ can be partitioned into disjoint pairs $\{j,k\}$, $j<k$, such that
$q_{n+k}=q_n+q_{n+j}$. In particular,
$$
 q_{n+2^d}\ge \min\{2q_{n+1},q_n+q_{n+2^{d-1}}\}\ge q_n+q_{n+1}.
$$
\end{abstract}

\maketitle

\section{Introduction}

The classical three-distance theorem (also known as the Steinhaus conjecture)
states that the gaps determined by a finite one-dimensional Kronecker
sequence have at most three distinct lengths
\cite{Swierczkowski1958,Slater1967}, see also
\cite{MarklofStrombergsson2017}. We study the higher-dimensional
nearest-neighbour analogue of this problem. Biringer and Schmidt considered
nearest-neighbour distances in the more general setting of Riemannian
manifolds \cite{BiringerSchmidt2008}.

Let $V$ be a $d$-dimensional real inner-product space with induced norm
$\|\cdot\|$, let $\cL\subset V$ be a full-rank lattice, and fix
$\balpha\in V$. We use bold $\balpha$ for the rotation vector and reserve
$\alpha$ for the one-dimensional case. The associated Kronecker sequence on
the flat torus $V/\cL$ is
$$
 \balpha+\cL,\ 2\balpha+\cL,\ 3\balpha+\cL,\ldots.
$$
For $N\ge2$ and $1\le n\le N$, set
\begin{equation}\label{eq:distance-def}
 \delta_{n,N}
 =\min\left\{\|(m-n)\balpha+\ell\|:
 1\le m\le N,\ \ell\in\cL,\ (m-n)\balpha+\ell\ne0\right\}.
\end{equation}
Thus $\delta_{n,N}$ is the least positive norm among vectors representing
displacements from $n\balpha+\cL$ to the first $N$ orbit points. As in Haynes
and Marklof \cite{HaynesMarklof2022}, the choice $m=n$ is allowed, so a
nonzero lattice vector from a torus point to itself may realise the minimum.
The final condition in \eqref{eq:distance-def} excludes zero distances caused
by repetitions in a finite orbit. Define
\begin{equation}\label{eq:g-def}
 g_N(\balpha,\cL)=\card\{\delta_{n,N}:1\le n\le N\}.
\end{equation}

The first bound of $g_N(\balpha,\cL)\leq 3^n+1$ was obtained in \cite{MR2443351}. Haynes and Marklof \cite{HaynesMarklof2022} obtained this upper bound for $g_N$ by a geometric lattice
argument and proved that the planar bound of five is sharp. Namely, they showed that five distances
occur for almost every rotation vector for infinitely many $N$. In dimension three their bound is $13$, and they
conjectured that the optimal universal bound is $9$.

Our main result gives a dimension-uniform bound and, in dimension three,
settles this conjecture.

\begin{theorem}\label{thm:main}
For every $d\ge1$, every $d$-dimensional real inner-product space $V$, every
full-rank lattice $\cL\subset V$, every $\balpha\in V$, and every $N\ge2$,
$$
 g_N(\balpha,\cL)\le2^d+1.
$$
\end{theorem}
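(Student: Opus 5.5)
We follow the Haynes--Marklof reduction and replace their counting step with the growth theorem for best-approximation denominators announced in the abstract. Fix $N$ and $n$. Since $m-n$ ranges over $\{1-n,\dots,N-n\}$, the distance $\delta_{n,N}$ is the minimum of $\|q\balpha+\ell\|$ over nonzero vectors with $q\in[-(n-1),\,N-n]$. By the symmetry $q\mapsto -q$ this equals
$$
\delta_{n,N}=\min\{\Phi(n-1),\Phi(N-n)\},\qquad \Phi(t)=\min\{\|q\balpha+\ell\|:\ 0\le q\le t,\ q\balpha+\ell\ne 0\}.
$$
The function $\Phi$ is nonincreasing and piecewise constant in $t$. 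It jumps exactly at the best-approximation denominators $q_1<q_2<\cdots$ of $\balpha$ with respect to $\cL$, taking $\Phi(t)=\|q_k\balpha+\ell_k\|=:r_k$ for $q_k\le t<q_{k+1}$. Here $q_1=0$ corresponds to the shortest lattice vector, which is why the case $m=n$ is allowed. Repetitions, meaning rational $\balpha$, only produce zero vectors, which are excluded, so they cause no trouble.

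Let $a=n-1$ and $b=N-n$, so that $a+b=N-1$. Each value $\delta_{n,N}$ is some $r_k$ with $q_k\le \max(a,b)$. Put $K$ for the index with $q_K\le N-1<q_{K+1}$. Every $\delta_{n,N}$ is at least $\Phi(N-1)=r_K$. On the other hand, $\min(a,b)\ge$ something, and $\min\{\Phi(a),\Phi(b)\}=\Phi(\max(a,b))$. Since $\max(a,b)\ge (N-1)/2$, every distance lies in
$$
\{r_k:\ q_k\le N-1,\ q_{k+1}>(N-1)/2\}.
$$
This is the set of indices $k$ with $q_{k+1}>(N-1)/2$ and $q_k\le N-1$. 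If it had at least $2^d+2$ elements $k=j,\dots,j+2^d+1$, then $q_{j+1}>(N-1)/2$ and $q_{j+2^d+1}\le N-1<2q_{j+1}$. The growth theorem applied at index $j+1$ then forbids $q_{j+1+2^d}\ge 2q_{j+2}$, hence certainly $q_{j+1+2^d}<2q_{j+2}$. It therefore forces the pairing alternative $q_{j+1+k}=q_{j+1}+q_{j+1+i}$.

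This crude window is not yet enough, because the pairing alternative does not by itself lead to a contradiction. The main obstacle is sharpening the window. The approach I would try first is to show that a value $r_k$ is actually attained only if there is an $n$ with $\max(a,b)\in[q_k,q_{k+1})$. This is automatic for $k$ in the window. The extra structure to exploit is that the pairing relation $q_{j+k}=q_j+q_{j+i}$ pairs up the jumps, and the Haynes--Marklof argument shows that at sum-type denominators the distances cannot all be new. Concretely, I would use the finer dichotomy as follows:
\begin{enumerate}
\item If $q_{n+2^d}\ge 2q_{n+1}$, then at most $2^d+1$ indices $k$ satisfy both $q_{k+1}>(N-1)/2$ and $q_k\le N-1$, where $n$ is the smallest index in the window. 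This gives the bound directly.
\item In the pairing case, the relation $q_{n+k}=q_n+q_{n+j}$ for all pairs shows that $q_n$ and $q_{n+j}$ are both at most $N-1$, and the $r$-values for a pair are tied by the triangle inequality. I would use this to argue that the extreme index in the window either exceeds $N-1$ or is forced below $(N-1)/2$, which again limits the count to $2^d+1$.
\end{enumerate}
I expect this endpoint bookkeeping, namely showing that the window contains at most $2^d+1$ consecutive indices using the inequality $q_{n+2^d}\ge\min\{2q_{n+1},q_n+q_{n+2^{d-1}}\}$, to be where the real work lies. The specific point to establish is that $q_n+q_{n+1}>N-1$ whenever $q_{n+1}>(N-1)/2$ together with one extra unit of room.
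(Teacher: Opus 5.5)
Your reduction is the same as the paper's. The formula $\delta_{n,N}=\Phi(\max\{n-1,N-n\})$ with $\Phi=\min\{\lambda_1(\cL),\psi\}$ is Proposition~\ref{prop:distance-formula}. Your window $\{k:\ q_k\le N-1,\ q_{k+1}>(N-1)/2\}$ is also correct; it is a harmless superset of the exact window, which has $\lfloor N/2\rfloor$ in place of $(N-1)/2$.

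\textbf{The gap.} The proposal does not finish, and the reason you give is wrong. You compare $q_{j+1+2^d}$ with $2q_{j+2}$, conclude that only the pairing alternative survives, and then assert that it yields no contradiction. It does yield one. Take $n=j+1$ and $M=2^d$.
\begin{itemize}
\item In the first alternative, $q_{n+M}\ge 2q_{n+1}>2q_n$.
\item In the pairing alternative, the pair $\{s,M\}$ containing $M$ gives $q_{n+M}=q_n+q_{n+s}>2q_n$.
\end{itemize}
This is just the last inequality in \eqref{eq:refined-growth}, $q_{n+M}\ge q_n+q_{n+1}>2q_n$. Since $q_{j+1}>(N-1)/2$, you get $q_{j+1+2^d}>2q_{j+1}>N-1$, which contradicts $q_{j+1+2^d}\le N-1$. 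Hence the window has at most $2^d+1$ indices, and $g_N\le 2^d+1$. That is the paper's entire argument: at most $2^d$ best denominators lie in $(\lfloor N/2\rfloor,N-1]$, plus one for the value at the left end.

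As submitted, everything after the crude window is a plan rather than an argument, and none of it is needed. This covers items 1 and 2, the triangle-inequality ties between $r$-values of paired indices, and the ``endpoint bookkeeping''. Your closing ``point to establish'' is simply
\[
q_n>(N-1)/2\ \Longrightarrow\ q_n+q_{n+1}>2q_n>N-1,
\]
with no extra room required.

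\textbf{What to add when you write the fix.} Theorem~\ref{thm:growth} concerns the sequence of Section~2, which starts at $q_1=1$, so you must say why it applies to your jump sequence.
\begin{itemize}
\item The nonzero jumps of $\Phi$ are exactly the best denominators with $0<r_j<\lambda_1(\cL)$.
\item Since $r_j$ is strictly decreasing, these form a block of consecutive terms of the Section~2 sequence.
\item Your index satisfies $j+1\ge 2$, so your $q_1=0$ never enters the application.
\end{itemize}
The same observation justifies treating finite-order $\balpha$ uniformly with the rest. The paper instead handles that case separately, showing $g_N=1$ once $N$ reaches the order.
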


As an immediate corollary, we obtain the promised three-dimensional
statement.

\begin{theorem}[Nine-distance theorem]\label{thm:nine-distance}
For every three-dimensional real inner-product space $V$, every full-rank
lattice $\cL\subset V$, every $\balpha\in V$, and every $N\ge2$,
$$
 g_N(\balpha,\cL)\le9.
$$
\end{theorem}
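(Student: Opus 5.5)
This is the case $d=3$ of Theorem~\ref{thm:main}, since $2^3+1=9$. So the proof is a one-line specialisation: given a three-dimensional inner-product space $V$, a full-rank lattice $\cL\subset V$, a vector $\balpha\in V$ and $N\ge2$, apply Theorem~\ref{thm:main} with $d=3$ and read off $g_N(\balpha,\cL)\le 2^3+1=9$. No further input is needed. The matching lower bound (sharpness of nine) comes from Dettmann's example cited in the abstract, and is not part of this statement.

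All the real work therefore lies in Theorem~\ref{thm:main}, whose proof I expect to follow the route announced in the abstract. The first step is the Haynes--Marklof reduction. One writes $\delta_{n,N}$ as a minimum of $\|k\balpha+\ell\|$ over the window $-(n-1)\le k\le N-n$. By translation invariance, this nearest-neighbour distance is governed by the best simultaneous approximations $q_1<q_2<\cdots$ of $\balpha$ modulo $\cL$: with $q\le N-1$, the candidate displacements are $\pm q_i$. A point $n$ can use $+q_i$ only if $n+q_i\le N$, and $-q_i$ only if $n-q_i\ge1$. Hence $\delta_{n,N}$ equals $\|q_i\balpha\|_{\cL}$ for the least index $i$ that is admissible at $n$, together with the self-distance (the lattice minimum, index $0$).

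The second step counts how many consecutive indices can be the first admissible one. If $i$ is first admissible at $n$, then for each earlier $q_j$ both $n+q_j>N$ and $n-q_j<1$ hold, so $N<q_j+n-1+q_j$ and $N$ is less than roughly $2q_j$. Meanwhile $q_i$ must fit in $[1,N]$ on one side. Combining this with the growth theorem $q_{m+2^d}\ge\min\{2q_{m+1},\,q_m+q_{m+2^{d-1}}\}\ge q_m+q_{m+1}$ shows that at most $2^d$ distinct best-approximation indices can occur as first-admissible ones. The partition alternative, $q_{m+k}=q_m+q_{m+j}$, is what handles the equality cases. Adding the one extra value gives $2^d+1$.

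The main obstacle is the growth theorem itself. The case analysis that turns "$N<q_{i-1}+q_{i-1}$"-type constraints into exactly $2^d$ surviving indices is the delicate bookkeeping step. But for the present theorem we simply invoke Theorem~\ref{thm:main}.
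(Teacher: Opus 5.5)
Your proof is correct and matches the paper, which obtains Theorem~\ref{thm:nine-distance} as an immediate corollary of Theorem~\ref{thm:main} with $d=3$, since $2^3+1=9$. One remark on your optional sketch of Theorem~\ref{thm:main}: because $r_j$ is strictly decreasing, $\delta_{n,N}$ is governed by the \emph{largest} best denominator $q_j\le\max\{n-1,N-n\}$, not the least admissible one; moreover, the paper uses only $q_{m+2^d}>2q_m$ (not the partition alternative) to show that the interval $(\lfloor N/2\rfloor,N-1]$ contains at most $2^d$ best denominators.
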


Dettmann \cite[Section~3.3]{Dettmann2025} found an example on the cubic
three-torus with
$$
 \balpha=\frac1{1334}(27,97,514),\qquad N=58,
$$
for which nine nearest-neighbour distances occur. Hence the nine-distance
theorem is sharp. Together with the classical one-dimensional result and the
sharp planar result of Haynes and Marklof, this also shows that
Theorem~\ref{thm:main} is sharp in dimensions $1$, $2$, and $3$.

\begin{remark}
Theorems~\ref{thm:main} and~\ref{thm:nine-distance} are not restricted to the
standard Euclidean norm on $\R^d$. Indeed, every $d$-dimensional real
inner-product space is linearly isometric to $\R^d$ with its standard inner
product. Under such an isometry, $\cL$ is carried to a full-rank lattice and
$\balpha$ to a rotation vector, while all distances, best-approximation
denominators, and nearest-neighbour distances are preserved. Thus the bounds
in Theorems~\ref{thm:main},~\ref{thm:nine-distance}, and~\ref{thm:growth}
hold for every norm induced by an inner product.
\end{remark}

The main tool to get Theorem~\ref{thm:main} is a new growth theorem for best
simultaneous approximation denominators. For $q\ge1$, put
$$
 \rho(q)=\dist(q\balpha,\cL).
$$
The best approximation denominators are the integers
$q_1<q_2<\cdots$ at which $\rho(q)$ attains a new strict minimum. Their growth
controls how many relevant record minima can occur in the range of
denominators that contributes to the nearest-neighbour distances.

Lagarias \cite{Lagarias1982} initiated the study of denominator growth for best
simultaneous approximations. After choosing lattice coordinates, his results
apply to the setting above. For an arbitrary norm on $\R^d$, he proved
\begin{equation}\label{eq:Lagarias-general}
 q_{n+2^{d+1}}\ge 2q_{n+1}+q_n.
\end{equation}
For the supremum norm he obtained the sharper bound
\begin{equation}\label{eq:Lagarias-sup}
 q_{n+2^d}\ge q_{n+1}+q_n.
\end{equation}
The latter holds for every $n$ when no coordinate of the vector is rational,
and for all sufficiently large $n$ for every vector outside $\mathbb Q^d$
\cite[Theorems~2.2--2.3]{Lagarias1982}. In dimension two, Lagarias used an
additional isolation argument to prove the supremum-norm growth-rate bound
determined by $x^4=x^2+1$ \cite[Theorem~1.2 and
Corollary~3.2]{Lagarias1982}.

A second classical estimate is expressed in terms of the contact number (also
called the kissing number) $K$ of the unit ball:
\begin{equation}\label{eq:contact-growth}
 q_{n+K}\ge q_{n+1}+q_n.
\end{equation}
Ermakov and Shutov attribute this general recurrence to Romanov
\cite{Romanov2006,Ermakov2010,Shutov2024}. For the Euclidean norm in dimension
two, Romanov proved the stronger four-step inequality
$$
 q_{n+4}\ge q_{n+1}+q_n.
$$
Ermakov subsequently improved the corresponding universal lower bound for the
exponential growth rate from $1.220744\ldots$ to $1.228043$
\cite{Ermakov2010}. For the two-dimensional supremum norm, Moshchevitin proved
that $q_{n+3}\ge q_{n+1}+q_n$ for infinitely many $n$
\cite{Moshchevitin2005}. Shutov gives a concise overview of these recurrences
and their applications to multidimensional distance problems
\cite{Shutov2024}.

Our denominator-growth theorem has the same index $2^d$ as Lagarias's
supremum-norm recurrence, but applies to every inner-product norm and includes
a description of the exceptional case.

\begin{theorem}\label{thm:growth}
Put $M=2^d$. Whenever $q_{n+M}$ is defined, one of the following alternatives
holds:
\begin{enumerate}
\item
$$
 q_{n+M}\ge2q_{n+1};
$$
\item the set $\{1,\ldots,M\}$ can be partitioned into disjoint pairs $\{j,k\}$,
$j<k$, such that
\begin{equation}\label{eq:pairing-recurrence}
 q_{n+k}=q_n+q_{n+j}.
\end{equation}
\end{enumerate}
Consequently, we always have
\begin{equation}\label{eq:refined-growth}
 q_{n+M}\ge
 \min\{2q_{n+1},\,q_n+q_{n+M/2}\}
 \ge q_n+q_{n+1}>2q_n.
\end{equation}
\end{theorem}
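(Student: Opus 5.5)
The plan is to work with the record vectors themselves rather than only their norms. For each $i$ fix $\ell_i\in\cL$ with $v_i:=q_i\balpha+\ell_i$ and $\|v_i\|=\rho(q_i)$. These vectors lie in the (possibly rank $d+1$) group $\Lambda=\Z\balpha+\cL$. The record property then reads as follows: if $w=q\balpha+\ell\in\Lambda$ with $1\le q<q_{m}$, then $\|w\|\ge\rho(q_{m-1})$. I will assume alternative (1) fails, i.e. $q_{n+M}<2q_{n+1}$, and aim to produce the pairing (2).

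\emph{Step 1 (differences).} For $1\le i<k\le M$, the vector $v_{n+k}-v_{n+i}$ has denominator $0<q_{n+k}-q_{n+i}\le q_{n+M}-q_{n+1}<q_{n+1}$. Hence either $q_{n+k}-q_{n+i}=q_n$, or the record property forces $\|v_{n+k}-v_{n+i}\|\ge\rho(q_n)>\|v_{n+k}\|,\|v_{n+i}\|$. In the first case I want to upgrade the equality of denominators to $v_{n+k}=v_{n+i}\pm v_n$ (up to the choice of $\ell$); this gives exactly a relation of the form \eqref{eq:pairing-recurrence}. So the vectors $\pm v_n$ play the role of ``forbidden'' differences.

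\emph{Step 2 (midpoints/parity).} Suppose two of $v_n,\dots,v_{n+M}$, say $v_{n+i},v_{n+k}$ with $i<k$, are congruent modulo $2\Lambda$. Then $(v_{n+k}\pm v_{n+i})/2\in\Lambda$. Take the plus sign: its denominator lies strictly between $q_{n+i}$ and $q_{n+k}$. By strict convexity of the inner-product norm (the two vectors are distinct), its norm is $<\|v_{n+i}\|\le\rho(q_{n+k-1})$. This contradicts the record property, so the vectors lie in distinct classes of $\Lambda/2\Lambda$. That alone bounds $M+1$ only by $2^{d+1}$. The refinement should use the assumption $q_{n+M}<2q_{n+1}$: the half-difference $(v_{n+k}-v_{n+i})/2$ has denominator $<q_{n+1}/2$ and norm $<\rho(q_n)$, which is again forbidden. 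So not only $v$ but also $v$ modulo the subgroup generated by $2\Lambda$ and the ``denominator-parity'' direction must be distinct. My first attempt is to project $\Lambda/2\Lambda$ (order $\le 2^{d+1}$) onto the quotient by the class of $v_n$, an index-2 quotient with $2^d$ classes. Two of the $M$ vectors $v_{n+1},\dots,v_{n+M}$ that collide there satisfy $v_{n+k}\equiv v_{n+i}+v_n \pmod{2\Lambda}$.

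\emph{Step 3 (collisions give the pairing).} For such a colliding pair, consider $u=(v_{n+k}-v_{n+i}-v_n)/2$ and $u'=(v_{n+k}+v_{n+i}-v_n)/2$ (or the analogue with $+v_n$), both in $\Lambda$. Using the parallelogram law together with $\|v_{n+k}-v_{n+i}\|\ge\rho(q_n)$ from Step 1, I expect to show that one of them is a nonzero vector of denominator $<q_{n+1}$ and norm $<\rho(q_n)$ (contradiction) unless $u=0$, i.e. $v_{n+k}=v_{n+i}+v_n$. Pigeonholing $M=2^d$ vectors into $2^d$ classes then shows that every class is hit at most twice. Moreover a class hit once would leave an unmatched vector, and a counting argument using the class of $v_n$ should force a perfect matching of $\{1,\dots,M\}$ into pairs with $q_{n+k}=q_n+q_{n+j}$. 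This is the step I expect to be the main obstacle. The danger is that the index-2 quotient is not well defined when $\Lambda$ has rank $d$ (rational $\balpha$), and that the inequalities are not strict enough; I would handle the degenerate rank separately and track the strictness via $\rho(q_{n+1})<\rho(q_n)$.

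\emph{Step 4 (consequences).} For \eqref{eq:refined-growth}: in case (2), the index $M$ is paired with some $j$, and $j\ge M/2$ is forced since the map $k\mapsto j$ is injective and decreasing in the pairing. Hence $q_{n+M}=q_n+q_{n+j}\ge q_n+q_{n+M/2}$. Finally $2q_{n+1}\ge q_n+q_{n+1}$ and $q_{n+M/2}\ge q_{n+1}>q_n$ give the remaining inequalities.
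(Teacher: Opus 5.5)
Your proposal breaks down in Step 3, at the point you flag, and what is missing is a new idea rather than bookkeeping. Let $b_i\in\F_2^{d+1}$ be the residue of $(q_{n+i},\ell_{n+i})\in\Z\times\cL$ modulo $2$, for $0\le i\le M$. Your half-difference argument in Step 2, together with halving each $v_{n+i}$ itself, shows that under $q_{n+M}<2q_{n+1}$ the classes $b_0,\dots,b_M$ are nonzero and pairwise distinct. In $\F_2^{d+1}/\langle b_0\rangle$ the images of $b_1,\dots,b_M$ therefore avoid the zero class, so $2^d$ elements fall into $2^d-1$ fibres of size two. This forces \emph{one} collision and nothing more: one full fibre plus $2^d-2$ half-full fibres is compatible with everything you have established. ``Every class is hit at most twice'' is automatic. ``No class is hit exactly once'' is the whole content of alternative~(2), and no counting can extract it from constraints that are pairwise or involve $b_0$.

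The missing ingredient is a three-term constraint among $b_1,\dots,b_M$ themselves: no three of them sum to zero. Suppose $b_i+b_j+b_k=0$ with $1\le i<j<k\le M$. Then $v_{n+i}+v_{n+j}-v_{n+k}$ and $v_{n+i}-v_{n+j}+v_{n+k}$ have halves with denominators in $(0,q_{n+1})$, so both have norm at least $2\rho(q_n)$. The four-sign identity then forces $\|-v_{n+i}+v_{n+j}+v_{n+k}\|<2\rho(q_{n+k})$, while the half of this vector has denominator in $(0,q_{n+k})$, contradicting the record property at $q_{n+k}$. Now use the equality case of the sum-free bound: a set of $2^d$ nonzero elements of $\F_2^{d+1}$ with no three summing to zero is a coset $a+H$ of a hyperplane $H$, $a\notin H$. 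Since $b_0\ne0$ and $b_0\notin\mathcal A=\{b_1,\dots,b_M\}$, this gives $b_0\in H$, so translation by $b_0$ is a fixed-point-free involution of $\mathcal A$. That involution is the perfect matching, and your collision analysis is then applied to each of its pairs.

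Several smaller steps also need repair.
\begin{enumerate}
\item In Step 2 the half-sum argument fails. We have $\|v_{n+i}\|=\rho(q_{n+i})\ge\rho(q_{n+k-1})$, with equality only when $k=i+1$. For $k\ge i+2$ the denominator $(q_{n+i}+q_{n+k})/2$ may exceed $q_{n+i+1}$, so no contradiction arises. Only the half-difference argument, which is where $q_{n+M}<2q_{n+1}$ enters, is valid.
\item In Step 3 the contradiction does not come from a vector of denominator below $q_{n+1}$: $u'$ has denominator $(q_{n+k}+q_{n+i}-q_n)/2$, which need not be smaller than $q_{n+1}$. The right input is not $\|v_{n+k}-v_{n+i}\|\ge\rho(q_n)$ but $\|v_{n+k}-v_{n+i}\pm v_n\|\ge2\rho(q_n)$. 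This holds when $q_{n+k}\ne q_n+q_{n+i}$, since both halves then have denominators of absolute value in $(0,q_{n+1})$. The four-sign identity then gives $\|u'\|<\rho(q_{n+k})$ with denominator in $(0,q_{n+k})$, contradicting the record at $q_{n+k}$. What this yields is $q_{n+k}=q_n+q_{n+i}$ (zero denominator for $u$), which is all you need; $u=0$ is a separate claim.
\item The rank-$d$ worry disappears if you reduce formal pairs $(q,\ell)\in\Z\times\cL\cong\Z^{d+1}$ modulo $2$ instead of elements of $\Lambda$, so that denominators are always defined.
\item The ``upgrade'' in Step 1 is neither justified nor needed.
\item In Step 4 the map $q_{n+j}\mapsto q_n+q_{n+j}$ from smaller to larger members is increasing, not decreasing. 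That is why the partner of $M$ is the largest of the $M/2$ smaller members and hence at least $M/2$.
\end{enumerate}
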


If the sequence of best denominators is infinite, Theorem~\ref{thm:growth}
also implies
$$
 \liminf_{n\to\infty}q_n^{1/n}\ge
 \begin{cases}
  (1+\sqrt5)/2,&d=1,\\
  2^{1/(2^d-1)},&d\ge2.
 \end{cases}
$$

Chevallier established a direct relation between nearest-neighbour distances
and best approximation denominators
\cite{ChevallierDistances1996,Chevallier1996}. Combined with Romanov's
four-step Euclidean recurrence, Chevallier's lemma already implies the planar
upper bound of five distances. More generally, combining it with the
contact-number recurrence \eqref{eq:contact-growth} gives the bound $K+1$, and
therefore $\sigma_d+1$ for the Euclidean norm. Thus the upper-bound parts of
the later Haynes--Marklof results were already implicit in the combination of
Chevallier's and Romanov's work. Shutov made this implication explicit
\cite{Shutov2024}. For other results on denominators of best simultaneous approximation, see \cite{MR4732677} and references therein.

In Section~4 we provide a reformulation of the argument from \cite{HaynesMarklof2022} in terms of the growth of the best-approximation denominators, which was in fact proven earlier by \cite{Romanov2006}, though his approach is very different from ours. Nevertheless, applied to the distance problem, it gives
$$
 g_N(\balpha,\cL)\le \sigma_d^{>}+1,
$$
where
$$
 \sigma_d^{>}
 =\max\left\{|C|:C\subset V,\ \|u\|=1\ (u\in C),\
    \ \langle u,v\rangle<\frac12\ \text{for }u\ne v\right\}.
$$
The quantity $\sigma_d^{>}$ depends only on $d$ and satisfies
$\sigma_d^{>}\le\sigma_d$. This known angular argument is included to compare
its consequences with the new $2^d$ recurrence and to retain the strict
$60^\circ$ separation already present in the Haynes--Marklof proof.

In Section 6, we compare two obtained bounds.

\section{Best approximation denominators}

Let $V$ be a real inner-product space of dimension $d$, with norm
$\|\cdot\|$, and let $\cL\subset V$ be a full-rank lattice. Fix
$\balpha\in V$ and define
$$
 \rho(q)=\dist(q\balpha,\cL)=\min_{p\in\cL}\|q\balpha-p\|,
 \qquad q\in\Z_{\ge1}.
$$
The sequence of best approximation denominators is defined by
\begin{equation}\label{eq:best-def}
 q_1=1,\qquad
 q_{n+1}=\min\{q>q_n:\rho(q)<\rho(q_n)\},
\end{equation}
whenever the set on the right is nonempty. If $\balpha+\cL$ has infinite
order in $V/\cL$, this gives an infinite sequence. If it has finite order, the
sequence terminates at a denominator with approximation error zero.

Write
$$
 r_n=\rho(q_n).
$$
By definition, the sequence $(r_n)_n$ is strictly decreasing. We shall use the
following record property throughout.

\begin{lemma}\label{lem:record}
For every defined best denominator $q_n$,
$$
 1\le q<q_n \quad\Longrightarrow\quad \rho(q)>r_n.
$$
\end{lemma}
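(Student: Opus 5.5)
We argue by induction on $n$, using only the recursive definition \eqref{eq:best-def} and the fact that $(r_n)_n$ is strictly decreasing. The claim says that each best denominator beats every strictly smaller positive integer. For $n=1$ there is nothing to prove, since $q_1=1$ and no integer $q$ satisfies $1\le q<1$.

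For the inductive step, suppose the statement holds for $q_n$ and that $q_{n+1}$ is defined. Take an integer $q$ with $1\le q<q_{n+1}$. We split into two cases according to the position of $q$ relative to $q_n$.

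\emph{Case $q<q_n$.} By the induction hypothesis, $\rho(q)>r_n$. Since $r_n>r_{n+1}$, this gives $\rho(q)>r_{n+1}$.

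\emph{Case $q=q_n$.} Here $\rho(q)=r_n>r_{n+1}$ directly.

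\emph{Case $q_n<q<q_{n+1}$.} Since $q_{n+1}$ is the least integer $q>q_n$ with $\rho(q)<\rho(q_n)$, such a $q$ satisfies $\rho(q)\ge r_n$. Hence $\rho(q)\ge r_n>r_{n+1}$.

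In all three cases $\rho(q)>r_{n+1}$, which completes the induction. There is no real obstacle here. The only point needing care is that the third case yields merely the non-strict inequality $\rho(q)\ge r_n$; strictness comes from $r_n>r_{n+1}$, which holds because $\rho(q_{n+1})<\rho(q_n)$ by the definition of $q_{n+1}$. If the sequence terminates, the statement is only asserted for defined $q_n$, so the induction runs over exactly those indices.
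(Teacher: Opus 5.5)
Your proof is correct and is essentially the paper's argument: induction on $n$, using the induction hypothesis for $q$ up to the previous best denominator and the minimality in \eqref{eq:best-def} for $q$ strictly between consecutive best denominators. The only difference is cosmetic: the paper merges your first two cases into the single case $q\le q_{n-1}$, which gives $\rho(q)\ge r_{n-1}>r_n$.
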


\begin{proof}
The case $n=1$ is trivial. Suppose the claim holds for $n-1$. If $q<q_n$,
then either $q\le q_{n-1}$, in which case
$\rho(q)\ge r_{n-1}>r_n$, or $q_{n-1}<q<q_n$, in which case the minimality
in \eqref{eq:best-def} gives $\rho(q)\ge r_{n-1}>r_n$.
\end{proof}

\section{Sum-free sets and a proof of Theorem \ref{thm:growth}}

We need the following classical sum-free bound and its equality case, see, for example, \cite{ClarkPedersen1992,GreenRuzsa2005}. We include the proof because it is short.

\begin{lemma}\label{lem:sum-free}
Let $m\ge1$ and let $A\subset \F_2^m\setminus\{0\}$. If no three distinct
elements of $A$ have sum zero, then
$$
 |A|\le 2^{m-1}.
$$
If equality holds, then $A$ is a nonzero coset of a codimension-one subspace
of $\F_2^m$.
\end{lemma}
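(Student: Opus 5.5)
Fix a codimension-one subspace and compare $A$ with its complement; the bound and the equality case both come from the same pairing argument. If $A$ is empty there is nothing to prove, so fix some $a\in A$. The key observation is that the translate $a+A=\{a+x:x\in A\}$ is disjoint from $A$. Indeed, suppose $a+x=y$ with $x,y\in A$. Then $a+x+y=0$. If $a,x,y$ are distinct, this contradicts the hypothesis. If $x=a$, then $y=0\notin A$. If $y=a$, then $x=0\notin A$. If $x=y$, then $a=0$. All cases are impossible.

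For the upper bound, I would not use the translate directly; a cleaner route is a pairing argument inside $\F_2^m\setminus\{0\}$. For $x\in A$ with $x\ne a$, the element $a+x$ is nonzero and does not lie in $A$, by the observation above. The map $x\mapsto a+x$ is injective. So $\{a+x: x\in A\setminus\{a\}\}$ is a set of $|A|-1$ nonzero vectors outside $A$, and it also avoids $a$ since $a+x=a$ would force $x=0$. Hence
$$
 |A|+(|A|-1)\le 2^m-1,
$$
which gives $|A|\le 2^{m-1}$.

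For the equality case, assume $|A|=2^{m-1}$. Then the counting above is tight, so the complement of $A$ in $\F_2^m\setminus\{0\}$ is exactly $\{a+x:x\in A,\ x\ne a\}$. Put $H=\{0\}\cup(\F_2^m\setminus(\{0\}\cup A))$; note that $|H|=2^{m-1}$. I want to show that $H=a+A$ and that $H$ is a subspace; then $A=a+H$ is a nonzero coset of a codimension-one subspace, since $a\notin H$.

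First, $a+A=\{0\}\cup\{a+x:x\ne a\}=H$. Because $a$ was arbitrary in $A$, this gives $a+A=H$ for every $a\in A$, so $A+A=H$. For closure of $H$ under addition, take $h_1,h_2\in H$ and write $h_1=a+b$ and $h_2=a+c$ with the same $a\in A$ and some $b,c\in A$; this is possible because $a+A=H$ for any fixed $a$. Then $h_1+h_2=b+c\in A+A=H$. So $H$ is a subgroup, hence a subspace of $\F_2^m$, of size $2^{m-1}$, i.e.\ of codimension one. Finally $A=a+H$ with $a\notin H$, which is the claimed nonzero coset.

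The main obstacle is getting the tight-case bookkeeping right: we need the identity $a+A=H$ to hold for \emph{every} $a\in A$, since that is what turns $A+A$ into the single set $H$. This is exactly what the tight count delivers.
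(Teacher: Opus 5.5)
Your proof is correct and takes essentially the same route as the paper: disjointness of $A$ and $a+A$ gives the bound by counting, and in the equality case $H=a+A=\F_2^m\setminus A$ is shown to be a subgroup of index two with $A=a+H$. The differences are cosmetic. You count inside $\F_2^m\setminus\{0\}$ rather than in all of $\F_2^m$. You also verify closure of $H$ by observing that $b+A=H$ for every $b\in A$, whereas the paper applies the hypothesis again to see that $x+y\notin A$ for distinct $x,y\in A$.
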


\begin{proof}
The assertion is trivial if $A=\varnothing$, so fix $a\in A$. We claim that
$A$ and $a+A$ are disjoint. Otherwise $x=a+y$ for some $x,y\in A$. The
elements $a,x,y$ are distinct: $x=a$ would give $y=0\notin A$, $y=a$ would
give $x=0\notin A$, and $x=y$ would give $a=0$. Hence $a+x+y=0$, contrary
to the hypothesis. Since translation preserves cardinality,
$$
 2|A|=|A|+|a+A|\le |\F_2^m|=2^m.
$$

Suppose now that equality holds. Then
$$
 \F_2^m=A\sqcup(a+A).
$$
Set $H=a+A$. We have $0\in H$. If $h_1=a+x$ and $h_2=a+y$ belong to $H$,
then $h_1+h_2=x+y$. This lies in $H$: it is $0$ when $x=y$, and otherwise
it cannot lie in $A$ by the hypothesis. Thus $H$ is a subgroup of index $2$,
and $A=a+H$.
\end{proof}

We shall also use the following elementary identity, which is essentially the parallelogram law.

\begin{lemma}\label{lem:four-sign}
For all $a,b,c$ in a real inner-product space,
\begin{align*}
 &\|a+b-c\|^2+\|a-b+c\|^2
   +\|-a+b+c\|^2+\|a+b+c\|^2 \\
 &\hspace{42mm}=4\bigl(\|a\|^2+\|b\|^2+\|c\|^2\bigr).
\end{align*}
\end{lemma}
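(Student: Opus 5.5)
The identity is a direct computation, so I will simply expand each of the four squared norms by bilinearity of the inner product and add them up. The only structural observation needed is the sign pattern. In each term every one of $\|a\|^2,\|b\|^2,\|c\|^2$ appears with coefficient $+1$, so these diagonal parts add up to $4(\|a\|^2+\|b\|^2+\|c\|^2)$. It then remains to check that the cross terms $2\langle a,b\rangle$, $2\langle a,c\rangle$, $2\langle b,c\rangle$ cancel.

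For the cross terms, note that the coefficient of $2\langle x,y\rangle$ in $\|\epsilon_a a+\epsilon_b b+\epsilon_c c\|^2$ is the product of the corresponding signs. The four sign vectors are
$$(+,+,-),\quad (+,-,+),\quad (-,+,+),\quad (+,+,+).$$
\begin{itemize}
\item For the pair $(a,b)$ the products $\epsilon_a\epsilon_b$ are $+1,-1,-1,+1$, which sum to $0$.
\item For the pair $(a,c)$ they are $-1,+1,-1,+1$, which sum to $0$.
\item For the pair $(b,c)$ they are $-1,-1,+1,+1$, which sum to $0$.
\end{itemize}
Hence all mixed terms vanish and the identity follows.

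Alternatively, one can present it as two applications of the parallelogram law $\|x+y\|^2+\|x-y\|^2=2\|x\|^2+2\|y\|^2$:
\begin{itemize}
\item with $x=a$, $y=b-c$ for the first two terms;
\item with $x=a+b+c$ and $x=-a+b+c$ paired appropriately for the last two, e.g. taking $x=b+c$, $y=a$, which gives $\|a+b+c\|^2+\|-a+b+c\|^2=2\|a\|^2+2\|b+c\|^2$.
\end{itemize}
Summing gives $4\|a\|^2+2\|b-c\|^2+2\|b+c\|^2$. Applying the parallelogram law once more to the last two terms gives $4\|a\|^2+4\|b\|^2+4\|c\|^2$. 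I would use this second route in the write-up since it explains the remark that the lemma is essentially the parallelogram law.

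There is no real obstacle here. The only care needed is bookkeeping the signs correctly in the pairing, namely that $a\pm(b-c)$ produces the first two terms and $(b+c)\pm a$ the last two.
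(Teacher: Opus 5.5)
Your proof is correct: the sign-product check is exactly the paper's argument, namely that each diagonal term appears four times and each mixed inner product appears twice with each sign. The parallelogram-law write-up you prefer is also valid, though it uses the law three times rather than two, and it simply makes explicit the paper's remark that the identity is essentially the parallelogram law.
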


\begin{proof}
Expanding the squared norms, each mixed inner-product term occurs twice with a
positive sign and twice with a negative sign, while each of $\|a\|^2$,
$\|b\|^2$, and $\|c\|^2$ occurs four times.
\end{proof}

\begin{proof}[Proof of Theorem~\ref{thm:growth}]
Put $M=2^d$ and write
$$
 Q_i=q_{n+i}\qquad(0\le i\le M).
$$
If $Q_M\ge2Q_1$, the first alternative holds, so suppose that
\begin{equation}\label{eq:exceptional-block}
 Q_M<2Q_1.
\end{equation}
For each $i$, choose $P_i\in\cL$ such that
$$
 x_i=Q_i\balpha-P_i,
 \qquad \|x_i\|=\rho(Q_i)=:r_i,
$$
and put $R=r_0$. Thus
\begin{equation}\label{eq:errors}
 R=r_0>r_1>\cdots>r_M.
\end{equation}
Since $Q_1$ is the first denominator after $Q_0$ at which the approximation
error is strictly smaller than $R$, we have
\begin{equation}\label{eq:below-Q1}
 0<q<Q_1\quad\Longrightarrow\quad \rho(q)\ge R.
\end{equation}

Choose a basis of $\cL$, and let $p_i\in\Z^d$ be the coordinate vector of
$P_i$. Set
$$
 z_i=(Q_i,p_i)\in\Z^{d+1},
 \qquad b_i=z_i\pmod 2\in\F_2^{d+1}.
$$
We first show that
$$
 \mathcal A=\{b_1,\ldots,b_M\}
$$
is a sum-free subset of $\F_2^{d+1}$ of cardinality $M$.

\smallskip
\noindent\emph{The classes are nonzero.}
If $b_i=0$ for some $1\le i\le M$, then $z_i/2$ is integral and
$$
 0<\frac{Q_i}{2}<Q_1
$$
by \eqref{eq:exceptional-block}. The vector $x_i/2$ is admissible in the
definition of $\rho(Q_i/2)$, and therefore
$$
 \rho(Q_i/2)\le\frac{r_i}{2}<R,
$$
contrary to \eqref{eq:below-Q1}.

\smallskip
\noindent\emph{The classes are distinct.}
Suppose $b_i=b_j$ with $1\le i<j\le M$. Then $(z_j-z_i)/2$ is integral and
$$
 0<\frac{Q_j-Q_i}{2}<Q_1.
$$
Consequently,
$$
 \rho\left(\frac{Q_j-Q_i}{2}\right)
 \le\frac12\|x_j-x_i\|
 \le\frac{r_i+r_j}{2}<R,
$$
again contradicting \eqref{eq:below-Q1}.

\smallskip
\noindent\emph{No three classes sum to zero.}
Suppose, to the contrary, that
\begin{equation}\label{eq:zero-sum-triple}
 b_i+b_j+b_k=0,
 \qquad 1\le i<j<k\le M.
\end{equation}
Define
$$
 \begin{aligned}
 A&=x_i+x_j-x_k,&
 B&=x_i-x_j+x_k,\\
 C&=-x_i+x_j+x_k,&
 D&=x_i+x_j+x_k.
 \end{aligned}
$$
The four analogous signed combinations of $z_i,z_j,z_k$ lie in
$2\Z^{d+1}$. Moreover,
$$
 0<\frac{Q_i+Q_j-Q_k}{2}<Q_1,
 \qquad
 0<\frac{Q_i-Q_j+Q_k}{2}<Q_1.
$$
Indeed, the lower bound in the first inequality follows from
$Q_i\ge Q_1$, $Q_j>Q_1$, and $Q_k<2Q_1$; all the other bounds follow from
the ordering of the $Q_i$ and \eqref{eq:exceptional-block}. Hence
\eqref{eq:below-Q1} gives
\begin{equation}\label{eq:AB-long}
 \|A\|\ge2R,\qquad \|B\|\ge2R.
\end{equation}

Lemma~\ref{lem:four-sign}, applied to $x_i,x_j,x_k$, gives
$$
 \|A\|^2+\|B\|^2+\|C\|^2+\|D\|^2
 =4\bigl(r_i^2+r_j^2+r_k^2\bigr).
$$
Using \eqref{eq:AB-long} and $\|D\|^2\ge0$, we obtain
$$
 \begin{aligned}
 \|C\|^2
 &\le4\bigl(r_i^2+r_j^2+r_k^2\bigr)-8R^2\\
 &=4r_k^2+4\bigl(r_i^2+r_j^2-2R^2\bigr)
 <4r_k^2,
 \end{aligned}
$$
because $i,j\ge1$ and hence $r_i,r_j<R$. Thus
\begin{equation}\label{eq:C-short}
 \|C\|<2r_k.
\end{equation}
On the other hand, $(-z_i+z_j+z_k)/2$ is integral and has denominator
$$
 t=\frac{-Q_i+Q_j+Q_k}{2},
 \qquad 0<t<Q_k.
$$
Therefore
$$
 \rho(t)\le\frac{\|C\|}{2}<r_k=\rho(Q_k),
$$
contrary to Lemma~\ref{lem:record}. This proves that $\mathcal A$ is
sum-free.

Since $|\mathcal A|=M=2^d=\frac12|\F_2^{d+1}|$,
Lemma~\ref{lem:sum-free} gives
\begin{equation}\label{eq:affine-hyperplane}
 \mathcal A=a+H,
\end{equation}
where $H$ is a codimension-one subspace and $a\notin H$.

We next show that $b_0$ is a nonzero element of $H$. If $b_0=0$, then
$Q_0/2<Q_1$ and
$$
 \rho(Q_0/2)\le R/2<R,
$$
contrary to \eqref{eq:below-Q1}. If $b_0=b_i$ for some $1\le i\le M$, then
$$
 0<\frac{Q_i-Q_0}{2}<Q_1
$$
and
$$
 \rho\left(\frac{Q_i-Q_0}{2}\right)
 \le\frac{r_i+R}{2}<R,
$$
again a contradiction. Thus $b_0\notin\mathcal A$ and $b_0\ne0$, so
\eqref{eq:affine-hyperplane} implies $b_0\in H$.

Translation by $b_0$ is therefore a fixed-point-free involution of
$\mathcal A$. It partitions $\{1,\ldots,M\}$ into pairs $\{j,k\}$ such that,
after ordering each pair with $j<k$,
\begin{equation}\label{eq:parity-pair}
 b_k=b_0+b_j.
\end{equation}
We claim that every such pair satisfies $Q_k=Q_0+Q_j$. Suppose otherwise.
The relation \eqref{eq:parity-pair} implies that the signed combinations of
$z_0,z_j,z_k$ used below are even. The two nonzero denominators
$$
 \frac{|Q_0+Q_j-Q_k|}{2},
 \qquad
 \frac{Q_0-Q_j+Q_k}{2}
$$
both lie in $(0,Q_1)$, by the ordering of the $Q_i$ and
\eqref{eq:exceptional-block}. The corresponding vectors are,
up to sign,
$$
 x_0+x_j-x_k,\qquad x_0-x_j+x_k.
$$
Hence \eqref{eq:below-Q1} shows that both of these vectors have norm at
least $2R$. The four-sign identity, now applied
to $x_0,x_j,x_k$, then gives
$$
 \|-x_0+x_j+x_k\|^2
 \le4(R^2+r_j^2+r_k^2)-8R^2
 <4r_k^2.
$$
But $(-z_0+z_j+z_k)/2$ has a positive denominator smaller than $Q_k$, which
again contradicts Lemma~\ref{lem:record}. Hence
$$
 Q_k=Q_0+Q_j,
$$
proving \eqref{eq:pairing-recurrence}.

Finally, let $\{s,M\}$ be the pair containing $M$. There are $M/2$ smaller
members of the pairs, and $s$ is the largest of them, because the map
$Q_j\mapsto Q_0+Q_j$ preserves order. Hence $s\ge M/2$ and
$$
 Q_M=Q_0+Q_s\ge Q_0+Q_{M/2}.
$$
This proves \eqref{eq:refined-growth}; its final strict inequality follows
from $Q_1>Q_0$ and $Q_{M/2}>Q_0$.
\end{proof}

\begin{corollary}\label{cor:growth-rate}
Suppose that the sequence of best denominators is infinite. Then
$$
 \liminf_{n\to\infty}q_n^{1/n}\ge
 \begin{cases}
  \dfrac{1+\sqrt5}{2},&d=1,\\
  2^{1/(2^d-1)},&d\ge2.
 \end{cases}
$$
\end{corollary}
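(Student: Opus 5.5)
The plan is to derive the corollary directly from the inequality \eqref{eq:refined-growth} of Theorem~\ref{thm:growth}, treating the cases $d=1$ and $d\ge2$ separately.

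\emph{Case $d=1$.} Here $M=2$ and $M/2=1$, so \eqref{eq:refined-growth} reads $q_{n+2}\ge\min\{2q_{n+1},q_n+q_{n+1}\}=q_n+q_{n+1}$, since $q_n<q_{n+1}$. Let $F_k$ be the Fibonacci numbers, $F_1=F_2=1$. Starting from $q_1\ge1$ and $q_2\ge2$, induction gives $q_n\ge F_{n+1}$. Hence
$$
\liminf_{n\to\infty} q_n^{1/n}\ge\lim_{n\to\infty} F_{n+1}^{1/n}=\frac{1+\sqrt5}{2}.
$$

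\emph{Case $d\ge2$.} Write $M=2^d$. The two terms in the minimum are $2q_{n+1}$ and $q_n+q_{n+M/2}$. Since $M/2\ge 2$, we have $q_{n+M/2}\ge q_{n+1}+1$. For the Fibonacci-type term I would use only $q_n+q_{n+M/2}> q_{n+M/2}\ge q_{n+1}$, but that is too weak by itself. Instead I would use the last inequality in \eqref{eq:refined-growth}, namely $q_{n+M}>2q_n$.

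Iterating this along the indices $n_0+kM$ gives $q_{n_0+kM}>2^k q_{n_0}$, which only yields the exponent $1/2^d$. That is not enough: the target exponent is $1/(2^d-1)$, so the relevant shift is $M-1$. The goal is therefore a bound of the form $q_{n+M-1}\gtrsim 2q_n$ up to a bounded loss.

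Taking the minimum at index $n-1$ gives $q_{n-1+M}\ge\min\{2q_n,\,q_{n-1}+q_{n-1+M/2}\}$. If the first term is the smaller one, we get $q_{n+M-1}\ge 2q_n$ directly. Otherwise $q_{n+M-1}\ge q_{n-1}+q_{n-1+M/2}$, and because $M/2-1\ge1$ we have $q_{n-1+M/2}\ge q_n$. This gives $q_{n+M-1}\ge q_{n-1}+q_n$, which is weaker than $2q_n$.

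I see two ways to handle this second alternative.

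\emph{Option 1: growth rate via a majorant recurrence.} Define $u_n=\log_2 q_n$ and show that, in the worst case, $q_n\ge c\lambda^n$ where $\lambda$ is the largest root of the worst-case recurrence. The difficulty is that $\max(2q_n, q_{n-1}+q_n)$-type recurrences can, in principle, beat $2^{1/(M-1)}$ in complicated ways. Since only a lower bound is needed, however, $q_{n+M-1}\ge q_{n-1}+q_n$ together with $q_{n-1+M}\ge 2q_n$ in the other case suggests checking that the root of $x^{M-1}=x^{-1}+1$ is at least $2^{1/(M-1)}$. For $M=4$ this is $x^4=x+1$, with root about $1.2207$. But $2^{1/3}\approx1.26$, so this route fails.

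\emph{Option 2: use $q_{n+M/2}$ fully.} I would therefore not use the bound $q_{n+M/2}\ge q_{n+1}$, but instead prove $\min\{2q_{n+1},q_n+q_{n+M/2}\}$-type growth by a potential argument. The idea is to show that $q_{n+k(M-1)}\ge 2^k c$ by a strong induction on $k$ with hypothesis $q_m\ge c\,2^{m/(M-1)}$ for all smaller $m$. In the second alternative this gives
$$
q_{n+M}\ge c\bigl(2^{n/(M-1)}+2^{(n+M/2)/(M-1)}\bigr),
$$
and one needs $1+2^{(M/2)/(M-1)}\ge 2^{M/(M-1)}$. For $M=4$ this reads $1+2^{2/3}\approx2.587\ge2^{4/3}\approx2.52$, which holds. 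For large $M$ the left side tends to $1+\sqrt2$ and the right side to $2$, so it holds as well; in general one checks it via $t=2^{1/(M-1)}$. In the first alternative, $2q_{n+1}\ge 2c\,2^{(n+1)/(M-1)}=c\,2^{(n+M)/(M-1)}$ exactly.

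So the induction $q_m\ge c\,2^{m/(M-1)}$ closes, with $c$ chosen to cover the first $M$ terms. Verifying the elementary inequality $1+t^{M/2}\ge t^{M}$ for $t=2^{1/(M-1)}$ is the main step. Since $t^{M-1}=2$, it is equivalent to $1+t^{M/2}\ge 2t$, which holds because $t^{M/2}\ge t^2\ge 2t-1$.
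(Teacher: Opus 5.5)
Your Option~2 is correct and is essentially the paper's argument: induction on $q_m\ge c\lambda^m$ with $\lambda=2^{1/(M-1)}$ and $c$ fixed by the first $M$ terms, where the first alternative closes exactly because $\lambda^{M-1}=2$ and the second because $1+\lambda^{M/2}\ge\lambda^M$; your $d=1$ case is the same Fibonacci recurrence argument. Your check of that inequality, $\lambda^M=2\lambda\le1+\lambda^2\le1+\lambda^{M/2}$, is slightly cleaner than the paper's comparison $\lambda^{M/2}\le2^{2/3}<(1+\sqrt5)/2$, and the abandoned Option~1 and the other false starts should simply be dropped from the write-up.
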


\begin{proof}
For $d=1$, Theorem~\ref{thm:growth} gives
$$
q_{n+2}\ge q_{n+1}+q_n.
$$
Put
$$
\varphi=\frac{1+\sqrt5}{2},
$$
so that $\varphi^2=\varphi+1$. Choose $c>0$ such that
$$
q_1\ge c\varphi,\qquad q_2\ge c\varphi^2.
$$
Suppose inductively that
$$
q_n\ge c\varphi^n,\qquad q_{n+1}\ge c\varphi^{n+1}.
$$
Then
$$
q_{n+2}\ge q_{n+1}+q_n\ge c\varphi^n(\varphi+1)=c\varphi^{n+2}.
$$
Thus $q_n\ge c\varphi^n$ for every $n$, and consequently
$$
\liminf_{n\to\infty}q_n^{1/n}\ge\varphi.
$$
Now suppose that $d\ge2$. Put $M=2^d$ and set $\lambda=2^{1/(M-1)}.$
Choose $c>0$ such that $q_j\ge c\lambda^j$ for all $1\le j\le M.$
We prove by induction that $q_j\ge c\lambda^j$
for every $j\ge1$. Suppose that the claim holds for all indices less than
$n+M$. If the first alternative of Theorem~\ref{thm:growth} holds, then
$$
q_{n+M} \ge 2q_{n+1}\ge 2c\lambda^{n+1}=c\lambda^{n+M},
$$
because $\lambda^{M-1}=2$.

In the second alternative, there exists $s\ge M/2$ such that
$$
 q_{n+M}=q_n+q_{n+s}.
$$
The induction hypothesis therefore gives
$$
 q_{n+M} \ge c\lambda^n+c\lambda^{n+s} \ge c\lambda^n\bigl(1+\lambda^{M/2}\bigr).
$$
It remains to verify that $1+\lambda^{M/2}\ge\lambda^M.$
Put $y=\lambda^{M/2}$. Since $M\ge4$,
$$
 y=2^{M/(2(M-1))}
 \le 2^{2/3}
 <\frac{1+\sqrt5}{2}.
$$
Since the positive root of $x^2=x+1$ is $(1+\sqrt5)/2$, it follows that $y^2<y+1.$
Hence
$$
 \lambda^M=y^2<1+y=1+\lambda^{M/2},
$$
and consequently
$$
 q_{n+M}\ge c\lambda^{n+M}.
$$
This completes the induction. Therefore
$$
 \liminf_{n\to\infty}q_n^{1/n}
 \ge\lambda
 =2^{1/(2^d-1)}.
$$

%
\end{proof}

In dimension $2$, Corollary~\ref{cor:growth-rate} gives
$$
 \liminf_{n\to\infty}q_n^{1/n}\ge2^{1/3}=1.259921\ldots,
$$
improving the lower bound $1.228043$ obtained by Ermakov
\cite{Ermakov2010}. The proof above uses the inner-product identity
of Lemma~\ref{lem:four-sign}. It does not directly extend to a general norm.

\section{Bound on growth with kissing numbers}

The following is essentially the result of Romanov \cite{Romanov2006} and Haynes and Marklof \cite[Propositions~18--19]{HaynesMarklof2022}, which was stated in different terms. Recall that $\sigma_d^{>}$ is the largest
number of unit vectors in $V$ with
pairwise inner products strictly less than $1/2$. Equivalently, their pairwise
angles are strictly greater than $60^\circ$. Since the ordinary kissing number
$\sigma_d$ permits inner product $1/2$, we have
$\sigma_d^{>}\le\sigma_d$. As our proof is different from \cite{Romanov2006} and provides a minor refinement by not considering the inner product equal to $1/2$, we provide the proof in full.

\begin{theorem}\label{thm:strict-growth}
With the notation of Section~2,
$$
 q_{n+\sigma_d^{>}}\ge q_n+q_{n+1}
$$
whenever $q_{n+\sigma_d^{>}}$ is defined.
\end{theorem}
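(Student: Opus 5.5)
Set $K=\sigma_d^{>}$ and argue by contradiction: suppose $Q_K:=q_{n+K}<q_n+q_{n+1}$. As in the proof of Theorem~\ref{thm:growth}, write $Q_i=q_{n+i}$ for $0\le i\le K$, choose $P_i\in\cL$ with $x_i=Q_i\balpha-P_i$ and $\|x_i\|=r_i$, and put $R=r_0$. Then $R=r_0>r_1>\cdots>r_K$ and
$$
0<q<Q_1\ \Longrightarrow\ \rho(q)\ge R .
$$
Each $x_i$ is nonzero for $i<K$; if $x_K=0$, replace it by a convenient vector or drop it, and treat this degenerate terminal case separately. The goal is to show that the $K+1$ unit vectors $u_i=x_i/\|x_i\|$, $0\le i\le K$, have pairwise inner products strictly less than $1/2$. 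This contradicts the definition of $\sigma_d^{>}$, since $K+1>\sigma_d^{>}$.

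\textbf{Key step (angle separation).} Take $0\le i<j\le K$. The difference $x_j-x_i$ represents the denominator $Q_j-Q_i$, so $\rho(Q_j-Q_i)\le\|x_j-x_i\|$. Since $Q_j\le Q_K<Q_0+Q_1\le Q_i+Q_1$, we have $0<Q_j-Q_i<Q_1$. The record property therefore gives $\|x_j-x_i\|\ge R\ge r_i>r_j$. Hence in the triangle with sides $x_i$, $x_j$, $x_j-x_i$, the side $x_j-x_i$ is strictly longer than both $\|x_i\|$ and $\|x_j\|$.

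Expanding, $\|x_i-x_j\|^2=r_i^2+r_j^2-2\langle x_i,x_j\rangle>\max(r_i,r_j)^2=r_i^2$. This gives $2\langle x_i,x_j\rangle<r_j^2\le r_ir_j$, so $\langle u_i,u_j\rangle<1/2$. Strictness holds because $r_j<r_i\le R\le\|x_j-x_i\|$. In fact the weaker bound $\|x_j-x_i\|\ge r_i$ with $r_j<r_i$ already suffices for the strict inequality. Note that the index $i=0$ is included, since $Q_j-Q_0<Q_1$ holds as well.

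\textbf{Conclusion and main obstacle.} We obtain $K+1$ unit vectors with pairwise angles exceeding $60^\circ$, which is impossible. Hence $q_{n+K}\ge q_n+q_{n+1}$. The main care point is the edge case $r_K=0$, which arises when $\balpha$ has finite order and the sequence terminates. In that case only $u_0,\dots,u_{K-1}$ are defined, giving $K$ vectors, and no contradiction follows. I would handle this separately. If $x_K=0$, then $Q_K\balpha\in\cL$, and for $0\le i<K$ the vector $-x_i$ represents the denominator $Q_K-Q_i$. If $Q_K<Q_0+Q_1$, this denominator lies in $(0,Q_1)$, so $\rho(Q_K-Q_0)\le r_0=R$. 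When $i\ge1$, the same argument gives $\rho(Q_K-Q_i)\le r_i<R$, contradicting the record property. This requires $K\ge2$, which holds for $d\ge1$ since $\sigma_1^{>}=2$. So the degenerate case also yields a contradiction, by taking $i=1$.
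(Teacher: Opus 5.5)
Your argument is correct and is essentially the paper's proof. The denominators $Q_j-Q_i$ lie in $(0,Q_1)$, so $\|x_j-x_i\|\ge R\ge r_i>r_j$ forces $\langle u_i,u_j\rangle<1/2$, and the terminal case $r_K=0$ is ruled out through the denominator $Q_K-Q_1$, just as in the paper. The only slip is the claim that $x_j-x_i$ is strictly longer than $x_i$, which can fail for $i=0$, where you only know $\|x_j-x_0\|\ge R=r_0$. As you note yourself, the strictness instead comes from $0<r_j<r_i$, which gives $2\langle x_i,x_j\rangle\le r_j^2<r_ir_j$.
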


\begin{proof}
Put $M=\sigma_d^{>}$ and suppose, to the contrary, that
$$
 Q_0<Q_1<\cdots<Q_M<Q_0+Q_1,
 \qquad Q_i=q_{n+i}.
$$
Choose nearest representatives
$$
 x_i=Q_i\balpha-P_i,
 \qquad r_i=\|x_i\|,
 \qquad R=r_0.
$$
As in \eqref{eq:below-Q1},
\begin{equation}\label{eq:angular-threshold}
 0<q<Q_1\quad\Longrightarrow\quad \rho(q)\ge R.
\end{equation}
If $r_M=0$, then
$$
 0<Q_M-Q_1<Q_0,\qquad
 \rho(Q_M-Q_1)=\rho(Q_1)=r_1<R,
$$
contrary to Lemma~\ref{lem:record}. Hence all the vectors $x_i$ are nonzero.

For $i<j$, put $q=Q_j-Q_i$. Then
$$
 0<q\le Q_M-Q_0<Q_1,
$$
and, since $P_j-P_i\in\cL$,
$$
 x_j-x_i=q\balpha-(P_j-P_i).
$$
Consequently,
\begin{equation}\label{eq:strict-pair}
 \|x_i-x_j\|\ge\rho(q)\ge R.
\end{equation}
Put $u_i=x_i/r_i$, so that $\|u_i\|=1$. If $i<j$ and
$\langle u_i,u_j\rangle\ge1/2$, then $r_i>r_j>0$ and
$$
 \begin{aligned}
 \|x_i-x_j\|^2
 &\le r_i^2+r_j^2-r_i r_j\\
 &=r_i^2-r_j(r_i-r_j)
 <r_i^2\le R^2,
 \end{aligned}
$$
contrary to \eqref{eq:strict-pair}. Thus
$$
 \langle u_i,u_j\rangle<\frac12
 \qquad(i\ne j),
$$
so $u_0,\ldots,u_M$ are $\sigma_d^{>}+1$ unit vectors with pairwise inner
products strictly less than $1/2$. This contradicts the definition of
$\sigma_d^{>}$.
\end{proof}

\section{Nearest-neighbour distances}

The following proposition expresses the quantities in \eqref{eq:distance-def}
in terms of the best approximation denominators. Let
$$
 \lambda_1(\cL)=\min\{\|\ell\|:\ell\in\cL\setminus\{0\}\}
$$
be the length of a shortest nonzero lattice vector, and put
$$
 \psi(m)=\min_{1\le q\le m}\rho(q),
 \qquad m\ge1.
$$

\begin{proposition}\label{prop:distance-formula}
If the cosets $n\balpha+\cL$, $1\le n\le N$, are distinct, then
\begin{equation}\label{eq:delta-formula}
 \delta_{i,N}
 =\min\left\{\lambda_1(\cL),
 \psi\bigl(\max\{i-1,N-i\}\bigr)\right\}.
\end{equation}
Consequently,
\begin{equation}\label{eq:g-formula}
 g_N(\balpha,\cL)
 =1+\card\left\{j:
 \left\lfloor\frac N2\right\rfloor<q_j\le N-1,
 \quad r_j<\lambda_1(\cL)\right\}.
\end{equation}
In particular,
\begin{equation}\label{eq:g-formula-upper}
 g_N(\balpha,\cL)
 \le 1+\card\left\{j:
 \left\lfloor\frac N2\right\rfloor<q_j\le N-1\right\}.
\end{equation}
\end{proposition}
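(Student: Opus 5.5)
The plan has three stages: first derive \eqref{eq:delta-formula} directly from the definition \eqref{eq:distance-def}, then count how many distinct values it produces as $i$ ranges over $1,\dots,N$, and finally read off \eqref{eq:g-formula-upper}.

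\textbf{Step 1: the formula \eqref{eq:delta-formula}.} Fix $i$ and split the admissible pairs $(m,\ell)$ in \eqref{eq:distance-def} by whether $m=n$ (with $n=i$).
\begin{itemize}
\item If $m=i$, the vector is a nonzero lattice vector $\ell$, and the smallest possible norm is $\lambda_1(\cL)$.
\item If $m\ne i$, write $q=|m-i|$. Then $1\le q\le \max\{i-1,N-i\}$, and every such $q$ is attained by $m=i+q$ or $m=i-q$ lying in $[1,N]$. Since $\|-v\|=\|v\|$, minimizing over $\ell$ gives $\rho(q)$.
\end{itemize}
The distinctness hypothesis ensures $q\balpha\notin\cL$ for $1\le q\le N-1$. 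Hence $\rho(q)>0$, and the exclusion of zero vectors removes nothing in the case $m\ne i$. Taking the minimum over $q$ yields $\psi(\max\{i-1,N-i\})$, and combining with the case $m=i$ gives \eqref{eq:delta-formula}.

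\textbf{Step 2: counting values for \eqref{eq:g-formula}.} As $i$ runs over $1,\dots,N$, the quantity $k=\max\{i-1,N-i\}$ takes exactly the integer values in $[\lceil (N-1)/2\rceil, N-1]$, which is $[\lfloor N/2\rfloor, N-1]$. So the set of distances is
$$
\{\min(\lambda_1,\psi(k)) : \lfloor N/2\rfloor\le k\le N-1\}.
$$
The function $\psi$ is nonincreasing and changes value exactly at the best denominators: $\psi(k)=r_j$ for $q_j\le k<q_{j+1}$, by Lemma~\ref{lem:record} and \eqref{eq:best-def}. Therefore the distinct values of $\psi$ on the range are
\begin{itemize}
\item $\psi(\lfloor N/2\rfloor)$, and
\item one new value $r_j$ for each $q_j\in(\lfloor N/2\rfloor, N-1]$.
\end{itemize}
These values are strictly decreasing. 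Truncating at $\lambda_1$ merges every value $\ge\lambda_1$ into the single value $\lambda_1$. I would argue the count as follows. The values strictly below $\lambda_1$ form a tail of the decreasing list. The first value of the list is either $\ge\lambda_1$, in which case the top block collapses to the one value $\lambda_1$, or it is already $<\lambda_1$, in which case all the subsequent $r_j$ are too. In both cases the total is $1+\#\{j:\lfloor N/2\rfloor<q_j\le N-1,\ r_j<\lambda_1\}$.

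The delicate point is the second case: we need the initial value $\psi(\lfloor N/2\rfloor)$ to be counted by the leading $1$ and not by any $j$ in the range. This holds because the jumps $r_j$ are counted only for $q_j>\lfloor N/2\rfloor$. When $N=2$, the range has $k=1$ only and the count is $1$. This bookkeeping, with a check of the edge cases, is the main obstacle.

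\textbf{Step 3: the bound \eqref{eq:g-formula-upper}.} This follows immediately by dropping the condition $r_j<\lambda_1$.
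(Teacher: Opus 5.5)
Your proof is correct and follows the same route as the paper. You split \eqref{eq:distance-def} into $m=i$ versus $m\ne i$ to obtain \eqref{eq:delta-formula}, then count the strict decreases of the nonincreasing function $k\mapsto\min\{\lambda_1(\cL),\psi(k)\}$ on $[\lfloor N/2\rfloor,N-1]$, which occur exactly at best denominators $q_j$ with $r_j<\lambda_1(\cL)$; your remark that distinctness forces $\rho(q)>0$ for $1\le q\le N-1$ and your two-case handling of $\psi(\lfloor N/2\rfloor)$ just make explicit what the paper's terser argument leaves implicit.
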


\begin{proof}
Write $k=m-i$ in \eqref{eq:distance-def}. The choice $k=0$ contributes
precisely the shortest nonzero lattice-vector length $\lambda_1(\cL)$. For
$k\ne0$, the possible absolute index differences form the set
$$
 \{1,2,\ldots,\max\{i-1,N-i\}\},
$$
and
$$
 \min_{\ell\in\cL}\|k\balpha+\ell\|=\rho(|k|).
$$
This proves \eqref{eq:delta-formula}.

As $i$ ranges from $1$ to $N$, the integer $\max\{i-1,N-i\}$ assumes
every value from $\lfloor N/2\rfloor$ through $N-1$. The function
$m\mapsto\min\{\lambda_1(\cL),\psi(m)\}$ changes value at a best denominator
$q_j$ precisely when $r_j<\lambda_1(\cL)$. This proves \eqref{eq:g-formula} and the inequality \eqref{eq:g-formula-upper} follows.
\end{proof}

\begin{proof}[Proof of Theorem~\ref{thm:main}]
First suppose that the $N$ points are distinct. By
\eqref{eq:g-formula-upper}, it suffices to count the best denominators in
$\bigl(\lfloor N/2\rfloor,N-1\bigr]$. If that interval contained $2^d+1$
best denominators, then for some $n$,
$$
 \left\lfloor\frac N2\right\rfloor<q_n<q_{n+1}<\cdots<q_{n+2^d}\le N-1.
$$
But
$$
 N-1<2\left(\left\lfloor\frac N2\right\rfloor+1\right)\le2q_n,
$$
so $q_{n+2^d}<2q_n$, contradicting Theorem~\ref{thm:growth}. Hence this
interval contains at most $2^d$ best denominators, and
\eqref{eq:g-formula-upper} gives $g_N\le 2^d+1$.

It remains to consider a finite-order rotation. Let $T$ be the order of
$\balpha+\cL$ in $V/\cL$. If $N<T$, the cosets are distinct and the preceding
argument applies. If $N\ge T$, the first $N$ terms contain every element of
the cyclic subgroup generated by $\balpha+\cL$. For each $n$, the nonzero
vectors $(m-n)\balpha+\ell$ occurring in \eqref{eq:distance-def} therefore
form the same set, so all the values $\delta_{n,N}$ are equal. Hence $g_N=1$,
including the case $T=1$, when this common value is $\lambda_1(\cL)$.
\end{proof}

\begin{corollary}
The optimal universal bounds in dimensions $1$, $2$, and $3$ are, respectively,
$$
 3,\qquad 5,\qquad 9.
$$
\end{corollary}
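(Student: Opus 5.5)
The corollary asks for two things in each dimension $d=1,2,3$: an upper bound and a matching example.

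\emph{Upper bounds.} These follow at once from Theorem~\ref{thm:main} with $2^d+1$: this gives $3$ for $d=1$, $5$ for $d=2$ and $9$ for $d=3$. The theorem is stated for every inner-product space $V$, every full-rank lattice, every $\balpha$ and every $N\ge2$, so these are universal bounds.

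\emph{Lower bounds, $d=2$ and $d=3$.} Here I will cite the known examples. For $d=3$, Dettmann's example on the cubic torus has $\balpha=\frac1{1334}(27,97,514)$ and $N=58$, and it exhibits nine distinct nearest-neighbour distances. For $d=2$, Haynes and Marklof proved that five distances occur for almost every $\balpha$ and infinitely many $N$, so the value $5$ is attained.

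\emph{Lower bound, $d=1$.} This is classical, but I would make it explicit through Proposition~\ref{prop:distance-formula} with $\cL=\Z$, where $\lambda_1=1$. Every $r_j\le 1/2<1$, so formula \eqref{eq:g-formula} gives
\[
g_N=1+\card\{j:\lfloor N/2\rfloor<q_j\le N-1\}.
\]
It therefore suffices to find an irrational $\alpha$ and an $N$ for which two consecutive best denominators lie in $(\lfloor N/2\rfloor,N-1]$. The natural choice is $\alpha=(\sqrt5-1)/2$. Its best approximation denominators in the distance-to-nearest-integer sense are the Fibonacci numbers $1,2,3,5,8,\dots$. Taking $N=6$, the interval is $(3,5]$, which contains only $5$. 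Taking $N=9$, it is $(4,8]$, which contains both $5$ and $8$, so $g_9=3$. Before relying on this I will check the record definition \eqref{eq:best-def} directly: $\rho(5)\approx0.0902$, which is below $\rho(3)\approx0.146$, and $\rho(8)\approx0.0557$, which is smaller again; no $q$ between these does better.

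The main obstacle is only the bookkeeping: verifying Dettmann's nine distances is computational, so I will take it from the cited source. What remains is to confirm that the cited examples use the same convention for $\delta_{n,N}$, in particular that $m=n$ is allowed, as in \cite{HaynesMarklof2022}.
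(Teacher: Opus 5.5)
Your proposal is correct and follows the paper's proof: the upper bounds come from Theorem~\ref{thm:main}, and sharpness for $d=2,3$ is cited from Haynes--Marklof and Dettmann, exactly as in the paper. The only difference is your $d=1$ witness. The paper lists the orbit of $\alpha=3/11$ with $N=5$ directly, while your golden-ratio example with $N=9$, checked via \eqref{eq:g-formula}, is equally valid, since $\psi$ takes exactly the three values $\rho(3),\rho(5),\rho(8)$ on $\{4,\ldots,8\}$.
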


\begin{proof}
Since $2^1+1=3$, Theorem~\ref{thm:main} gives
$g_N(\alpha,\Z)\le3$. For $\alpha=3/11$ and $N=5$, the orbit points, in
increasing order, are
$$
 \frac1{11},\frac3{11},\frac4{11},\frac6{11},\frac9{11}.
$$
Their positive nearest-neighbour distances are therefore $1/11$, $2/11$, and $3/11$.
This example also appears in \cite[Section~3.1]{Dettmann2025}.

For $d=2$, Haynes and Marklof \cite{HaynesMarklof2022} proved that on the standard two-torus five distances occur for almost every $\balpha$ for infinitely many $N$. For $d=3$, Dettmann proved that on $\R^3/\Z^3$,
$$
 \balpha=\frac1{1334}(27,97,514),\qquad N=58,
$$
gives $g_N(\balpha,\Z^3)=9$ \cite[Section~3.3]{Dettmann2025}.
The corresponding upper bounds $5$ and $9$ follow from
Theorem~\ref{thm:main}.
\end{proof}

\section{Comparison of two bounds}

Theorem~\ref{thm:main} gives the bound $g_N\le 2^d+1$. On the other hand,
Theorem~\ref{thm:strict-growth}, Proposition~\ref{prop:distance-formula}, and
the same counting argument used in the proof of Theorem~\ref{thm:main} give
\begin{equation}\label{eq:kissing-distance-bound}
 g_N(\balpha,\cL)\le \sigma_d^{>}+1\le\sigma_d+1.
\end{equation}
This is the kissing-number bound mentioned in the introduction. Writing out two bounds in a table, we get
$$
\begin{array}{c|cccccccc}
d & 3 & 4 & 5 & 6 & 7 & 8 & 9 & 10 \\ \hline
\text{bound }2^d+1
  & 9 & 17 & 33 & 65 & 129 & 257 & 513 & 1025 \\
\text{bound }\sigma_d^{>}+1
  & 13 & 25 & 45 & 78 & 135 & 240 & 364 & 554
\end{array}
$$

The improvement in dimension $8$ in the kissing-number row uses the strict inequality in the definition of $\sigma_d^{>}$. Indeed, up to orthogonal transformations there is a unique set of $240$ unit
vectors in $\R^8$ with pairwise inner products at most $1/2$
\cite{BannaiSloane1981}. This is the normalised minimal-vector configuration
of the $E_8$ lattice, and some pairs have inner product exactly $1/2$
\cite{ConwaySloane1999}. Hence $\sigma_8^{>}\le239$, and
Equation~\eqref{eq:kissing-distance-bound} gives $g_N\le240$. The $196560$-point Leech kissing
configuration is likewise unique and
contains pairs with inner product exactly $1/2$
\cite{BannaiSloane1981,ConwaySloane1999}. Hence
$\sigma_{24}^{>}\le196559$, and equation~\eqref{eq:kissing-distance-bound} gives
$g_N\le196560$.

Finally, we note the Kabatiansky--Levenshtein bound implies
$$
 g_N(\balpha,\cL)
 \le 1+\sigma_d^{>}
 \le 1+2^{0.401d(1+o(1))}
 \quad d\to\infty,
$$
see \cite{KabatianskyLevenshtein1978,ConwaySloane1999}.

\bibliographystyle{amsplain}
\bibliography{growth_denominators_nine_distance}

\end{document}